\newtheorem{theorem}{Theorem}[section]
\newtheorem{lemma}[theorem]{Lemma}
\newtheorem{proposition}[theorem]{Proposition}
\newcommand{\cm}{c_{{\rm mean}}}
\newcommand{\cv}{c_{{\rm var}}}
\newcommand{\E}{\mathbb{E}}
\definecolor{lightgray}{gray}{0.7}
\definecolor{midgray}{gray}{.9}
\newcommand\be{\begin{equation}}
\newcommand\ee{\end{equation}}
\newcommand\bea{\begin{eqnarray}}
\newcommand\eea{\end{eqnarray}}
\newcommand\bi{\begin{itemize}}
\newcommand\ei{\end{itemize}}
\newcommand\ben{\begin{enumerate}}
\newcommand\een{\end{enumerate}}
\DeclareMathOperator{\Var}{Var}
\numberwithin{equation}{section}
\begin{document}

\title{Individual Gap Measures from Generalized Zeckendorf Decompositions}

\author{Robert Dorward}
\email{\textcolor{blue}{\href{mailto:rdorward@oberlin.edu}{rdorward@oberlin.edu}}}
\address{Dept. of Mathematics, Oberlin College, Oberlin, OH 44074}

\author{Pari L. Ford}
\email{\textcolor{blue}{\href{mailto:fordpl@bethanylb.edu}{fordpl@bethanylb.edu}}}
\address{Dept. of Mathematics and Physics, Bethany College, Lindsborg, KS 67456}

\author{Eva Fourakis}
\email{\textcolor{blue}{\href{mailto:erf1@williams.edu}{erf1@williams.edu}}}
\address{Dept. of Mathematics and Statistics, Williams College, Williamstown, MA 01267}

\author{Pamela E. Harris}
\email{\textcolor{blue}{\href{mailto:pamela.harris@usma.edu}{pamela.harris@usma.edu}}}
\address{Dept. of Mathematical Sciences, United States Military Academy, West Point, NY 10996}

\author{Steven J. Miller}
\email{\textcolor{blue}{\href{mailto:sjm1@williams.edu, Steven.Miller.MC.96@aya.yale.edu}{sjm1@williams.edu, Steven.Miller.MC.96@aya.yale.edu}}}
\address{Dept. of Mathematics and Statistics, Williams College, Williamstown, MA 01267}

\author{Eyvindur A. Palsson}
\email{\textcolor{blue}{\href{mailto:eap2@williams.edu}{eap2@williams.edu}}}
\address{Dept. of Mathematics and Statistics, Williams College, Williamstown, MA 01267}

\author{Hannah Paugh}
\email{\textcolor{blue}{\href{mailto:hannah.paugh@usma.edu}{hannah.paugh@usma.edu}}}
\address{Dept. of Mathematical Sciences, United States Military Academy, West Point, NY 10996 }

\date{\today}

\subjclass[2010]{11B39, 11B05  (primary) 65Q30, 60B10 (secondary)}

\keywords{Zeckendorf decompositions, individual gap measures, L\'{e}vy's criterion}

\thanks{The authors thank the AIM REUF program, the SMALL REU and Williams College, and West Point for generous  support. The first and third named authors were supported by NSF Grant DMS1347804, and the fifth named author by NSF Grant DMS1265673. This research was performed while the fourth named author held a National Research Council Research Associateship Award at USMA/ARL}

%\author[3]{Robert Dorward\thanks{rdorward@oberlin.edu.}}
%\author[2]{Pari Ford\thanks{fordpl@unk.edu.}}
%\author[4]{Eva Fourakis\thanks{erf1@williams.edu.}}
%\author[1]{Pamela E. Harris\thanks{pamela.harris@usma.edu. This research was performed while the author held a National Research Council Research Associateship Award at USMA/ARL.}}
%\author[4]{Steven J. Miller\thanks{\fix{sjm1@williams.edu, Steven.Miller.MC.96@aya.yale.edu.}}}
%\author[4]{Eyvindur Palsson\thanks{\fix{Eyvindur.A.Palsson@williams.edu}}}
%\author[1]{Hannah Paugh\thanks{hannah.paugh@usma.edu.}}
%\affil[1]{Department of Mathematical Sciences, United States Military Academy}
%\affil[2]{Department of Mathematics, University of Nebraska at Kearney}
%\affil[3]{Department of Mathematics, Oberlin College}
%\affil[4]{Department of Mathematics \& Statistics, Williams College}
%\renewcommand\Authands{ and }

\maketitle

\begin{abstract}
Zeckendorf's theorem states that every positive integer can be uniquely decomposed as a sum of nonconsecutive Fibonacci numbers. The distribution of the number of summands converges to a Gaussian, and the individual measures on gaps between summands for $m \in [F_n, F_{n+1})$ converge to geometric decay for almost all $m$ as $n\to\infty$. While similar results are known for many other recurrences, previous work focused on proving Gaussianity for the number of summands or the average gap measure. We derive general conditions which are easily checked yield geometric decay in the individual gap measures of generalized Zeckendorf decompositions attached to many linear recurrence relations.
\end{abstract}

%MSC Codes: 	05E10
%%%%%%%%%%%%%%%%%%%%%%%%%%%%%%%%%%%%%%%%%%%%%%%%%%%%%%%%%%%%%%%%%%%%%%%%%%%%%%%%%%%%%%%%%%%%%%%%%%%%%%%%%%%%%%%%%%%%%%%%%%%%%%%%%%%%%%%%%%%%%%%%%%%%
%%%%%%%%%%%%%%%%%%%%%%%%%%%%%%%%%%%%%%%%%%%%%%%%%%%%%%%%%%%%%%%%%%%%%%%%%%%%%%%%%%%%%%%%%%%%%%%%%%%%%%%%%%%%%%%%%%%%%%%%%%%%%%%%%%%%%%%%%%%%%%%%%%%%
%%%%%%%%%%%%%%%%%%%%%%%%%%%%%%%%%%%%%%%%%%%%%%%%%%%%%%%%%%%%%%%%%%%%%%%%%%%%%%%%%%%%%%%%%%%%%%%%%%%%%%%%%%%%%%%%%%%%%%%%%%%%%%%%%%%%%%%%%%%%%%%%%%%%
%%%%%%%%%%%%%%%%%%%%%%%%%%%%%%%%%%%%%%%%%%%%%%%%%%%%%%%%%%%%%%%%%%%%%%%%%%%%%%%%%%%%%%%%%%%%%%%%%%%%%%%%%%%%%%%%%%%%%%%%%%%%%%%%%%%%%%%%%%%%%%%%%%%%
\section{Introduction} \label{sec:intro}

If we define the Fibonaccis by $F_1 = 1$, $F_2 = 2$ and $F_{n+1} = F_n + F_{n-1}$, Zeckendorf \cite{Ze} proved the remarkable property that every positive integer can be uniquely written as a sum of non-consecutive Fibonacci numbers (this property is equivalent to the definition of the Fibonaccis). Zeckendorf's theorem has been generalized to other sequences, see among others \cite{Al,Day,DDKMMV,DDKMV,DG,GT,GTNP,Ste1,Ste2}. Many authors proved that sequences $\{a_n\}$ defined by suitable linear recurrences lead to unique decompositions, with the number of summands of $m \in [a_n, a_{n+1})$ converging to a Gaussian (see for example \cite{LT,MW}) and the average gap measure converging to geometric decay (see \cite{BBGILMT, BILMT}). It is significantly easier to focus on the average gap measures rather than the individual gap measures associated to each $m$; in this note we isolate a general set of conditions which suffice to prove these individual measures converge almost surely to geometric decay.

We work in great generality so the arguments below will apply to numerous sequences. We assume we have a sequence $\{b_n\}$ and a decomposition rule that leads to unique decomposition. Fix constants $c_1,d_1, c_2,d_2$ such that $I_n:= [b_{c_1n+d_1},b_{c_2n+d_2})$ is a well-defined interval for all $n>0$. Let $\delta(x-a)$ denote the Dirac delta functional (assigning a mass of 1 to $x=a$ and 0 otherwise), $k(z)$ be the number of summands in $z$'s decomposition ($z = b_{\ell_1} + \cdots + b_{\ell_{k(z)}})$, and the total number of gaps for all $z\in I_n$ is
\begin{equation}
N_{\rm gaps}(n) \ :=\ \sum_{z=b_{c_1n+d_1}}^{b_{c_2n+d_2}-1}(k(z)-1).
\end{equation}

\begin{itemize}

\item \emph{Spacing gap measure:} We define the spacing gap measure of a $z \in I_n$ by
\begin{equation}
\nu_{z,n}(x) \ := \ \frac{1}{k(z)-1}\sum_{j=2}^{k(z)}\delta(x-(\ell_j-\ell_{j-1})).
\end{equation}
% Note that $\ell_j = \ell_j(z)$, though we omit this dependence on $z$ as it should be clear from context and reduces clutter from the notation.

\item \emph{Average spacing gap measure:} 
The average spacing gap measure for all $z\in I_n$ is
\begin{align} \nu_n(x)  := \frac1{N_{{\rm gaps}}(n)} \sum_{z=b_{c_1n+d_1}}^{b_{c_2n+d_2}-1} \sum_{j=2}^{k(z)} \delta\left(x - (\ell_j - \ell_{j-1})\right)  =  \frac1{N_{{\rm gaps}}(n)} \sum_{z=b_{c_1n+d_1}}^{b_{c_2n+d_2}-1} \left(k(z)-1\right) \nu_{z,n}(x).
\end{align}
Letting $P_n(g)$ denote the probability of a gap of length $g$ among all gaps from the decompositions of all $m\in I_n$, we have
\begin{equation}
 \nu_n(x) \ = \ \sum_{g=0}^{c_2n+d_2-1} P_n(g) \delta(x - g).
\end{equation}

\item \emph{Limiting average spacing gap measure, limiting gap probabilities:} If the limits exist:
\begin{equation} \label{gaplim}
\nu(x) \ := \ \lim_{n\to\infty} \nu_n(x), \ \ \ \ P(k) \ := \ \lim_{n \to \infty} P_n(k).
\end{equation}

%\item \emph{Indicator function for one gap:}
%For $g \ge 0$ let
%\begin{equation}
% X_{i,i+g}(n)\ =\  \#\{z \in I_n:\ \text{$b_i$, $b_{i+g}$ in $z$'s decomposition, but not $G_{i+q}$ for $0 < q < g$}\}.
%\end{equation}

\item \emph{Indicator function for two gaps:}
For $g_1,g_2 \ge 0$
\begin{align}
X_{j_1,j_1+g_1, j_2,j_2+g_2}(n)  & \ := \  \#\left \{
z \in I_n:\begin{subarray}\ b_{j_1}, b_{j_1+g_1}, b_{j_2}, b_{j_2+g_2}\ \text{in}\ z\text{'s\ decomposition,}\\
\text{but\ not\ } b_{j_1+q}, b_{j_2+p}\ \text{for}\ 0<q<g_1, 0<p<g_2
\end{subarray}
\right \}.
\end{align}

%\item \emph{Specific gap length probability:} Recall that $P_n(g)$ is the probability
%\begin{equation}
%P_n(g)\ :=\ \frac{1}{N_{\rm gaps}(n)}\sum_{i=1}^{c_2n+d_2-g}X_{i,i+g}(n).
%\end{equation}

\end{itemize}

We generalize the work in \cite{BILMT}. The authors there concentrated on a specific class of recurrences; our arguments are general. In addition to holding for the oft studied positive linear recurrences, they hold for new systems such as the $m$-gonal numbers of \cite{DFFHMPP}, as well as sequences without unique decomposition \cite{CFHMN}.

\begin{theorem}\label{genthm} For $z\in I_n$, the individual gap measures $\nu_{z,n}(x)$ converge almost surely in distribution  to the average gap measure $\nu(x)$ if the following hold.

\begin{enumerate}
\item The number of summands for decompositions of $z \in I_n$ converges to a Gaussian with mean $\mu_n = \cm n+O(1)$ and variance $\sigma_n^2 = \cv n+O(1)$, for constants $\cm, \cv>0$, and $k(z) \ll n$ for all $z \in I_n$.

\item We have the following, with $\lim_{n\to\infty} \sum_{g_1, g_2} {\rm error}(n,g_1,g_2) = 0$:
\begin{align}
\frac{2}{|I_n|\mu_n^2}\sum_{j_1<j_2}X_{j_1,j_1+g_1,j_2,j_2+g_2}(n) \ = \  P(g_1)P(g_2)+\text{{\rm error}}(n,g_1,g_2).
\end{align}

%\item For all $z\in I_n$, $k(z)\ll n$.

\item The limits in Equation~(\ref{gaplim}) exist.
\end{enumerate}

\end{theorem}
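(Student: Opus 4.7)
The plan is to invoke a L\'evy-style criterion: since both $\nu$ and each $\nu_{z,n}$ are supported on $\mathbb{Z}_{\geq 0}$, convergence in distribution is equivalent to pointwise convergence of mass functions. So it suffices to show that for each fixed gap length $g$, the proportion $\nu_{z,n}(\{g\})$ tends to $P(g)$ for a density-$1$ subset of $z \in I_n$, and then upgrade these per-$g$ statements to convergence of measures by a countable union bound together with the tail decay of $\nu$ (which follows from condition (3), since $\nu$ is a probability measure).

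Fix $g$ and let $Y_g(z) := \#\{j : b_j, b_{j+g} \in z\text{'s decomposition, with no } b_{j+q} \text{ for } 0<q<g\}$, so that $(k(z)-1)\,\nu_{z,n}(\{g\}) = Y_g(z)$. The first step is to replace the random denominator $k(z)-1$ by the deterministic $\mu_n-1$: by the Gaussian concentration in condition (1), $|k(z)-\mu_n| = o(\mu_n)$ off an exceptional set of vanishing density, and $k(z) \ll n$ gives a safe crude bound on that set. This reduces the task to showing $Y_g(z)/\mu_n \to P(g)$ in probability under the uniform measure on $I_n$. The first moment is immediate: averaging $Y_g$ over $z \in I_n$ recovers the definition of $P_n(g)$, and condition (3) delivers $\mathbb{E}_z[Y_g(z)/\mu_n] \to P(g)$.

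The heart of the argument is the second moment. Expanding gives
\begin{equation}
\mathbb{E}_z\!\left[(Y_g(z)/\mu_n)^2\right] \ = \ \frac{2}{|I_n|\,\mu_n^2}\sum_{j_1<j_2} X_{j_1,j_1+g,j_2,j_2+g}(n)\,+\,\frac{1}{|I_n|\,\mu_n^2}\sum_j X_{j,j+g}(n),
\end{equation}
where the diagonal contribution is $O(\mu_n^{-1}) = o(1)$ (using $k(z) \ll n$ to bound each diagonal indicator uniformly), and condition (2) with $g_1 = g_2 = g$ evaluates the off-diagonal sum as $P(g)^2 + \mathrm{error}(n,g,g)$. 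Comparing with the square of the first moment yields $\mathrm{Var}_z[Y_g(z)/\mu_n] \to 0$, and Chebyshev's inequality then produces convergence in probability $\nu_{z,n}(\{g\}) \to P(g)$.

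The main obstacle will be combining the per-$g$ statements into a single density-$1$ set of $z$ on which $\nu_{z,n} \to \nu$ in distribution. This is exactly what the joint error bound $\sum_{g_1,g_2}\mathrm{error}(n,g_1,g_2) \to 0$ in condition (2) is designed for: summing Chebyshev over $g$ with weights reflecting the tail of $\nu$ gives a uniform bound on the measure of $z$ for which $\nu_{z,n}$ is far from $\nu$ in total variation restricted to a growing window $[0,G_n]$, while the tail $\nu([G_n,\infty))$ from condition (3) controls the remainder. The delicate point is choosing $G_n$ slowly enough that the aggregated variance bound still tends to zero but quickly enough to kill the tail; this is the only place where the strength of the rate in condition (2) beyond mere convergence would matter, and absent a rate one resorts to a diagonal/subsequence argument to extract the almost-sure statement.
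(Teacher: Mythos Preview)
Your second-moment/Chebyshev skeleton is exactly the paper's, but the paper applies it to the characteristic functions $\widehat{\nu_{z,n}}(t)$ at each fixed $t$ rather than to the masses $\nu_{z,n}(\{g\})$ at each fixed $g$. Concretely, the paper proves $\E_z[\widehat{\nu_{z,n}}(t)]\to\widehat{\nu}(t)$ and $\Var_z[\widehat{\nu_{z,n}}(t)]\to 0$ (after the same replacement of $k(z)-1$ by $\mu_n$ via Gaussianity that you perform), and then invokes L\'evy's criterion together with Chebyshev. The advantage of their packaging is that $\widehat{\nu_{z,n}}(t)^2$ already contains the full double sum over $g_1,g_2$, so condition~(2) with its \emph{summed} error plugs in directly to kill the variance at each $t$; in your route only the diagonal piece $g_1=g_2=g$ of condition~(2) is used per $g$, and you then face a separate combination step. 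That step is easier than you suggest, though: since $\nu$ is a probability measure on $\ZZ_{\ge0}$, for any $\varepsilon>0$ a \emph{fixed} truncation $G$ with $\sum_{g>G}P(g)<\varepsilon$ suffices, and finitely many in-probability statements for $g\le G$ combine trivially---no $G_n\to\infty$ or diagonal extraction is needed for convergence in distribution. So your worry in the last paragraph is largely moot, and in fact your approach needs less of condition~(2) than the paper's does. Conversely, the paper's characteristic-function version avoids the truncation bookkeeping entirely but then has its own (glossed-over) step of passing from per-$t$ concentration to L\'evy's criterion uniformly in $z$.
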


%%%%%%%%%%%%%%%%%%%%%%%%%%%%%%%%%%%%%%%%%%%%%%%%%%%%%%%%%%%%%%%%%%%%%%%%%%%%%%%%%%%%%%%%%%%%%%%%%%%%%%%%%%%%%%%%%%%%%%%%%%%%%%%%%%%%%%%%%%%%%%%%%%%%
%%%%%%%%%%%%%%%%%%%%%%%%%%%%%%%%%%%%%%%%%%%%%%%%%%%%%%%%%%%%%%%%%%%%%%%%%%%%%%%%%%%%%%%%%%%%%%%%%%%%%%%%%%%%%%%%%%%%%%%%%%%%%%%%%%%%%%%%%%%%%%%%%%%%
%%%%%%%%%%%%%%%%%%%%%%%%%%%%%%%%%%%%%%%%%%%%%%%%%%%%%%%%%%%%%%%%%%%%%%%%%%%%%%%%%%%%%%%%%%%%%%%%%%%%%%%%%%%%%%%%%%%%%%%%%%%%%%%%%%%%%%%%%%%%%%%%%%%%
%%%%%%%%%%%%%%%%%%%%%%%%%%%%%%%%%%%%%%%%%%%%%%%%%%%%%%%%%%%%%%%%%%%%%%%%%%%%%%%%%%%%%%%%%%%%%%%%%%%%%%%%%%%%%%%%%%%%%%%%%%%%%%%%%%%%%%%%%%%%%%%%%%%%

\section{Proof of Theorem \ref{genthm}}\label{App:B}

We need the following definitions.

\begin{itemize}
\item $\widehat {\nu_{z,n}}(t)$: The characteristic function of $\nu_{z,n}(x)$.

\item $\widehat \nu(t)$: The characteristic function of the limiting average gap distribution $\nu(x)$.

\item $\E_z[\dots]$: The expected value over $z \in I_n$ with the uniform measure:
\begin{equation}
\E_z[X] \ :=\ \frac{1}{|I_n|}\sum_{z=b_{c_1n+d_1}}^{b_{c_2n+d_2}-1}X(z).
\end{equation}
\item \emph{Indicator function for one gap:}
For $g \ge 0$ let
\begin{equation}
 X_{i,i+g}(n)\ =\  \#\{z \in I_n:\ \text{$b_i$, $b_{i+g}$ in $z$'s decomposition, but not $G_{i+q}$ for $0 < q < g$}\}.
\end{equation}

\end{itemize}
\begin{proposition} \label{mean}
We have
\begin{align}
\lim_{n\rightarrow\infty}\E_z[\widehat{\nu_{z;n}}(t)]\ = \ \widehat{\nu}(t).
\end{align}
\end{proposition}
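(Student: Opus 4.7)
The plan is to express $\E_z[\widehat{\nu_{z,n}}(t)]$ in terms of $\widehat{\nu_n}(t)$ and then invoke hypothesis~(3) to conclude $\widehat{\nu_n}(t)\to\widehat{\nu}(t)$ pointwise (this is just L\'evy continuity applied to the weak convergence $\nu_n \to \nu$). The obstruction to pulling the sum out directly is the $z$-dependent normalizing factor $\frac{1}{k(z)-1}$ in $\widehat{\nu_{z,n}}$; the Gaussian behavior of $k(z)$ from hypothesis~(1) is exactly what makes this factor essentially deterministic.

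First I expand
\[
\E_z[\widehat{\nu_{z,n}}(t)] \ = \ \frac{1}{|I_n|}\sum_{z \in I_n}\frac{1}{k(z)-1}\sum_{j=2}^{k(z)}e^{it(\ell_j-\ell_{j-1})},
\]
and partition $I_n$ into the typical set $A_n := \{z : |k(z)-\mu_n| \le \sigma_n \log n\}$ and its complement. Chebyshev combined with hypothesis~(1) gives $|I_n \setminus A_n|/|I_n| = O((\log n)^{-2})$, and since $|\widehat{\nu_{z,n}}(t)|\le 1$ trivially, the atypical $z$ contribute $o(1)$ to the expectation. On $A_n$, using $\mu_n = \cm n + O(1)$ and $\sigma_n^2 = \cv n + O(1)$,
\[
\left|\frac{1}{k(z)-1}-\frac{1}{\mu_n}\right| \ = \ \frac{|k(z)-\mu_n - 1|}{(k(z)-1)\mu_n} \ = \ O\!\left(\frac{\log n}{n^{3/2}}\right);
\]
because the inner sum has $k(z)-1 \ll n$ unit-modulus terms (using $k(z)\ll n$ from hypothesis~(1)), swapping $\frac{1}{k(z)-1}$ for $\frac{1}{\mu_n}$ introduces a uniform error of size $O((\log n)/\sqrt{n}) = o(1)$.

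This reduces the expectation to
\[
\frac{1}{|I_n|\mu_n}\sum_{z\in I_n}\sum_{j=2}^{k(z)}e^{it(\ell_j-\ell_{j-1})} \ + \ o(1).
\]
The remaining double sum collects every gap appearing in the decompositions of all $z\in I_n$ weighted by $e^{itg}$, so by the definition of $P_n(g)$ and $\widehat{\nu_n}$ it equals $N_{\rm gaps}(n)\,\widehat{\nu_n}(t)$. Since $N_{\rm gaps}(n) = \sum_{z\in I_n}(k(z)-1) = |I_n|(\mu_n - 1)$, the prefactor $\frac{N_{\rm gaps}(n)}{|I_n|\mu_n} = 1 - \frac{1}{\mu_n}$ tends to $1$ as $n\to\infty$, and hypothesis~(3) gives $\widehat{\nu_n}(t)\to\widehat{\nu}(t)$, completing the proof.

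The only delicate step is the replacement of the random denominator $k(z)-1$ by the deterministic $\mu_n$; everything else is bookkeeping. Hypothesis~(2) (the two-gap correlation estimate) plays no role in this proposition and is presumably reserved for the second-moment computation $\E_z[|\widehat{\nu_{z,n}}(t)|^2]\to|\widehat{\nu}(t)|^2$ needed to upgrade convergence in expectation to the almost sure convergence stated in Theorem~\ref{genthm}.
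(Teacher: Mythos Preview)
Your proof is correct and follows essentially the same line as the paper's: isolate the typical set where $k(z)$ concentrates near $\mu_n$, replace the random denominator $k(z)-1$ by $\mu_n$ with $o(1)$ error, and then identify the resulting average with $\widehat{\nu_n}(t)$ (the paper routes this last step through the single-gap counts $X_{j,j+g}(n)$, but that is exactly your identification $\sum_{z}\sum_j e^{it(\ell_j-\ell_{j-1})}=N_{\rm gaps}(n)\widehat{\nu_n}(t)$ written out). The only cosmetic difference is that the paper invokes the Gaussian tail directly to get an exponentially small atypical set, whereas you use Chebyshev with threshold $\sigma_n\log n$ to get an $O((\log n)^{-2})$ atypical set; both bounds are more than enough here, and your version has the mild advantage of using only the second-moment part of hypothesis~(1).
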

First notice that
\begin{align}
\widehat{\nu_{z,n}}(t) \ := \   \int_{0}^{\infty} e^{ixt}\nu_{z,n}(t)dx \ = \  \frac{1}{k(z)-1}\sum_{j=2}^{k(z)}e^{it(\ell_j-\ell_{j-1})},
\end{align}
where $z= b_{\ell_1}+\dots+b_{\ell_{k(z)}}.$
Therefore
\begin{equation}
\E_z[\widehat{\nu_{z,n}}(t)] \ = \  \frac{1}{|I_n|} \sum_{z = b_{c_1n+d_1}}^{b_{c_2n+d_2}-1} \frac{1}{k(z)-1}\sum_{j=2}^{k(z)}e^{it(\ell_j-\ell_{j-1})}.
\end{equation}

\begin{lemma} \label{lemmamean1}
We have
\begin{equation}\lim_{n\rightarrow\infty} \frac{1}{|I_n|} \sum_{z = b_{c_1n+d_1}}^{b_{c_2n+d_2}-1}\left(\frac{(k(z)-1)-\mu_n}{(k(z)-1)\mu_n}\right)\sum_{j=2}^{k(z)}e^{it(\ell_j-\ell_{j-1})}\ =\ 0.\end{equation}
\end{lemma}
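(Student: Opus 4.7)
The plan is to reduce the claim to a concentration estimate for $k(z)$ around its mean $\mu_n$, which is then handled by the variance bound supplied by hypothesis~(1) via Cauchy--Schwarz.

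First I would pass to absolute values inside the average. Since $\left|\sum_{j=2}^{k(z)} e^{it(\ell_j - \ell_{j-1})}\right| \le k(z) - 1$ whenever $k(z)\ge 2$ (and the terms with $k(z)\le 1$ have empty inner sum and contribute $0$, which also handles the nominal $0/0$ in the coefficient), the $z$th summand is bounded in modulus by
\begin{equation*}
\frac{|(k(z)-1)-\mu_n|}{(k(z)-1)\mu_n}\cdot (k(z)-1) \ = \ \frac{|(k(z)-1)-\mu_n|}{\mu_n}.
\end{equation*}
So the lemma reduces to showing $\mu_n^{-1}\,\E_z\bigl[|(k(z)-1)-\mu_n|\bigr]\to 0$.

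Next, by Cauchy--Schwarz (equivalently Jensen's inequality applied to $x\mapsto x^2$),
\begin{equation*}
\E_z\bigl[|(k(z)-1)-\mu_n|\bigr] \ \le \ \sqrt{\E_z\bigl[\bigl((k(z)-1)-\mu_n\bigr)^2\bigr]}.
\end{equation*}
Since $\mu_n$ is (up to $O(1)$) the mean of $k(z)$ over $z\in I_n$, expanding gives $\E_z\bigl[((k(z)-1)-\mu_n)^2\bigr] = \sigma_n^2 + O(1) = \cv n + O(1)$ by hypothesis~(1). Dividing by $\mu_n = \cm n + O(1)$ produces a bound of size $O(n^{-1/2})$, which tends to zero. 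Thus the whole average vanishes in the limit, proving the lemma.

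There is no serious obstacle: the content is precisely that the random weight $1/(k(z)-1)$ may be replaced, on average, by the deterministic $1/\mu_n$, and this is a pure concentration statement driven by $\sigma_n^2/\mu_n^2 = O(1/n)$. The only bookkeeping care needed is (i) the unit shift between $\E_z[k(z)]$ and $\E_z[k(z)-1]$, which is absorbed into the $O(1)$ in $\mu_n = \cm n + O(1)$, and (ii) the convention that $z$ with $k(z)\le 1$ contributes $0$, which aligns with the definition of $\nu_{z,n}$ requiring at least one gap.
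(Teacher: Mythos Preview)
Your argument is correct and is in fact cleaner than the paper's. The paper proves the lemma by splitting $I_n$ into a ``near'' set $I_n(\delta)=\{z:|k(z)-\mu_n|\le (\cv n)^{1/2+\delta}\}$ and its complement: on the near set it bounds the fraction by $n^{1/2+\delta}/n^2$ and the inner exponential sum by $k(z)\ll n$, giving $n^{-1/2+\delta}$; on the far set it invokes the Gaussian tail to show only an exponentially small proportion of $z$ land there. Your approach avoids the case split entirely by observing that the inner sum is bounded by $k(z)-1$, which cancels the same factor in the denominator, reducing everything to $\mu_n^{-1}\E_z|k(z)-1-\mu_n|$; Cauchy--Schwarz and the variance hypothesis then give $O(n^{-1/2})$ directly. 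In particular, you use only the second-moment part of hypothesis~(1) (that $\sigma_n^2=\cv n+O(1)$) and never need the Gaussian limit law or the crude bound $k(z)\ll n$, so your route is both shorter and rests on a strictly weaker assumption.
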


\begin{proof}
We break into cases based on how far away $k(z)$ is from the mean. For $0<\delta<1/2$ 
\begin{align}
I_n(\delta)\ : = \ \{z\in I_n: k(z) \in [\mu_n- (\cv n)^{1/2}, \mu_n+(\cv n)^{1/2}]\}
\end{align}
\textbf{Case 1}: Let $z\in I_n(\delta)$. Thus $k(z)$ is close to $\mu_n$. As $k(z) \ll n$
\begin{eqnarray}
\frac{1}{|I_n|} \sum_{\substack{z = b_{c_1n+d_1}\\z\in I_n(\delta)}}^{b_{c_2n+d_2}-1}\left(\frac{(k(z)-1)-\mu_n}{(k(z)-1)\mu_n}\right)\sum_{j=2}^{k(z)}e^{it(\ell_j-\ell_{j-1})} & \ \ll \ & \frac{1}{|I_n|} \sum_{z = b_{c_1n+d_1}}^{b_{c_2n+d_2}-1}\frac{n^{1/2+\delta}}{n^2}\sum_{j=2}^{k(z)}e^{it(\ell_j-\ell_{j-1})}\nonumber\\
& \ \ll \ &\frac{|I_n|n}{n^{3/2-\delta}|I_n|} \ = \  n^{-1/2+\delta},
\end{eqnarray}
where the last line follows because $k(z)\ll n$. \\
\textbf{Case 2}: Let $k(z)\notin I_n(\delta)$.
By Gaussianity, for sufficiently large $n$, the probability that $z\in I_n$ is in this case is essentially
\begin{align}
2\int_{\cv^{1/2} n^{1/2+\delta}}e^{-t^2/2b^2n}dt\ll e^{-n^{2\delta}/2}.
\end{align}
Therefore the number of integers $z\in I_n\setminus I_n(\delta)$ is essentially $|I_n|e^{-n^{2\delta}/2}.$ Thus
\begin{align}
\frac{1}{|I_n|} \sum_{\substack{z = b_{c_1n+d_1}\\z\notin I_n(\delta)}}^{b_{c_2n+d_2}-1}\left(\frac{(k(z)-1)-\mu_n}{(k(z)-1)\mu_n}\right)\sum_{j=2}^{k(z)}e^{it(\ell_j-\ell_{j-1})} \ \ll \ \frac{1}{|I_n|} \cdot |I_n|e^{-n^{2\delta}/2}\cdot n\  = \ ne^{-n^{2\delta}/2},
\end{align}
which tends to zero as $n\rightarrow \infty$ and proves the claim.
\end{proof}
Through a similar argument we have
\begin{lemma}\label{lemmamean2}
\begin{align}
\lim_{n\rightarrow\infty} \frac{1}{|I_n|} \sum_{z = b_{c_1n+d_1}}^{b_{c_2n+d_2}-1}\left(\frac{(k(z)-1)^2-\mu_n^2}{(k(z)-1)^2\mu_n^2}\right)\left(\sum_{j=2}^{k(z)}e^{it(\ell_j-\ell_{j-1})}\right)^2 = 0.
\end{align}
\end{lemma}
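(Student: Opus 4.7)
The plan is to mimic the proof of Lemma \ref{lemmamean1} almost verbatim, with the same partition of $I_n$ into $I_n(\delta)$ and its complement. The key algebraic observation is the factorization
\begin{equation}
\frac{(k(z)-1)^2 - \mu_n^2}{(k(z)-1)^2 \mu_n^2} \ = \ \frac{(k(z)-1) - \mu_n}{(k(z)-1) \mu_n} \cdot \frac{(k(z)-1) + \mu_n}{(k(z)-1) \mu_n},
\end{equation}
which writes the new coefficient as the coefficient from Lemma \ref{lemmamean1} times a second factor of size $\ll n/n^2 = 1/n$. This extra $1/n$ is precisely what absorbs the extra factor of $n$ gained when the inner exponential sum is squared (trivially $\left|\sum_{j=2}^{k(z)} e^{it(\ell_j-\ell_{j-1})}\right| \le k(z) \ll n$, so the square is $\ll n^2$).

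For Case 1 ($z \in I_n(\delta)$), the first factor is $\ll n^{-3/2+\delta}$ as in Lemma \ref{lemmamean1} and the second is $\ll n^{-1}$, so the coefficient is $\ll n^{-5/2+\delta}$. Multiplying by the squared exponential sum ($\ll n^2$) and averaging over $|I_n|$ values of $z$ gives a bound of $n^{-1/2+\delta}$, which tends to zero. For Case 2 ($z \notin I_n(\delta)$), Gaussianity again bounds the proportion of such $z$ by $e^{-n^{2\delta}/2}$, while the coefficient itself is $O(1)$: rewriting it as $\left|1/\mu_n^2 - 1/(k(z)-1)^2\right|$, both pieces are $O(1)$ provided $k(z) \ge 2$ (the handful of $z$ with $k(z) \le 1$ contribute trivially or vacuously). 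The total contribution is then $\ll n^2 e^{-n^{2\delta}/2}$, which also tends to zero.

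There is no serious obstacle; the whole point is that squaring preserves the cancellation from Lemma \ref{lemmamean1}. The only thing worth tracking carefully is the bookkeeping of powers of $n$: the extra factor of $n$ from squaring the inner exponential sum is exactly compensated by the extra factor of $n$ from squaring $\mu_n$ in the denominator, thanks to the difference-of-squares factorization above. Otherwise the argument is an exact parallel of the previous lemma.
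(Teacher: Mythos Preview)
Your proposal is correct and is exactly what the paper intends: the paper gives no separate proof of Lemma \ref{lemmamean2}, merely writing ``through a similar argument'' after Lemma \ref{lemmamean1}, and your difference-of-squares factorization together with the same $I_n(\delta)$ split is precisely the similar argument. Your bookkeeping of the powers of $n$ in both cases is accurate, and your remark about $k(z)\le 1$ handles the only edge case not addressed explicitly in the paper's sketch.
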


Proposition~\ref{mean} now follows.
\begin{proof}[Proof of Proposition~\ref{mean}]
By Lemma~\ref{lemmamean1}, we replace $\frac{1}{k(z)-1}$ with $\frac{1}{\mu_n}$ with negligible error:
\begin{align}
\E_z[\widehat{\nu_{z,n}}(t)] &\ = \  \frac{1}{|I_n|} \sum_{z = b_{c_1n+d_1}}^{b_{c_2n+d_2}-1} \frac{1}{k(z)-1}\sum_{j=2}^{k(z)}e^{it(\ell_j-\ell_{j-1})} \ = \  \frac{1}{|I_n|\mu_n}  \sum_{z = b_{c_1n+d_1}}^{b_{c_2n+d_2}-1}\sum_{j=2}^{k(z)} e^{it(\ell_j-\ell_{j-1})} + o(1)\nonumber\\
&\ = \ \frac{1}{|I_n|\mu_n}  \sum_{g = 0}^{c_2n+d_2-1} \sum_{j=1}^{c_2n+d_2-g}X_{j,j+g}(n)e^{itg}+ o(1) \ = \ \sum_{g=0}^{c_2n+d_2-1}P_n(g)e^{itg}+ o(1),
\end{align}
with the last equality follows by definition. Then
\begin{equation}
\lim_{n\rightarrow\infty} \E_z[\widehat{\nu_{z,n}}(t)]\ = \ \lim_{n\rightarrow\infty} \Bigg(\sum_{g=0}^{c_2n+d_2-1}P_n(g)e^{itg}+ o(1)\Bigg)\ = \ \sum_{g=0}^{\infty}P(g)e^{itg}\ = \ \widehat{\nu}(t),
\end{equation}
which completes the proof.
\end{proof}

\begin{proposition}\label{var}
We have
\begin{equation}
\lim_{n\rightarrow \infty} \Var_n(t) \ :=\ \lim_{n\rightarrow \infty} \E_z[(\widehat{\nu_{z,n}}(t)-\widehat{\nu}_n(t))^2] \ = \  0.
\end{equation}
\end{proposition}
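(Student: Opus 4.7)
The plan mirrors the proof of Proposition~\ref{mean}: expand the square and reduce the problem to showing $\E_z[\widehat{\nu_{z,n}}(t)^2] \to \widehat{\nu}(t)^2$, then compute this second moment by a double-sum manipulation that uses Lemma~\ref{lemmamean2} to replace $(k(z)-1)^{-2}$ with $\mu_n^{-2}$ at negligible cost and invokes hypothesis (2) of Theorem~\ref{genthm} to evaluate the resulting two-gap sum asymptotically as $P(g_1)P(g_2)$.

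First I would expand
\begin{equation}
\E_z\bigl[(\widehat{\nu_{z,n}}(t) - \widehat{\nu}_n(t))^2\bigr] \ = \ \E_z[\widehat{\nu_{z,n}}(t)^2] - 2\widehat{\nu}_n(t)\,\E_z[\widehat{\nu_{z,n}}(t)] + \widehat{\nu}_n(t)^2.
\end{equation}
From the proof of Proposition~\ref{mean} we have $\E_z[\widehat{\nu_{z,n}}(t)] = \widehat{\nu}_n(t) + o(1)$ with $\widehat{\nu}_n(t) \to \widehat{\nu}(t)$, so the last two terms combine to $-\widehat{\nu}(t)^2 + o(1)$ and it is enough to prove $\E_z[\widehat{\nu_{z,n}}(t)^2] \to \widehat{\nu}(t)^2$. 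For this, expand
\begin{equation}
\widehat{\nu_{z,n}}(t)^2 \ = \ \frac{1}{(k(z)-1)^2} \sum_{j_1,j_2=2}^{k(z)} e^{it[(\ell_{j_1}-\ell_{j_1-1})+(\ell_{j_2}-\ell_{j_2-1})]},
\end{equation}
and, exactly as in the proof of Proposition~\ref{mean}, use Lemma~\ref{lemmamean2} to replace $(k(z)-1)^{-2}$ by $\mu_n^{-2}$ up to $o(1)$.

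Next, split the resulting double sum into a diagonal ($j_1=j_2$) and an off-diagonal ($j_1 \neq j_2$) part. The diagonal has at most $k(z)-1 \ll n$ unit-modulus terms per $z$, so averaging over $z \in I_n$ gives a contribution of size $O(n/\mu_n^2) = O(1/n)$ since $\mu_n \asymp n$. For the off-diagonal I would exploit symmetry to restrict to $j_1<j_2$ (with a factor of $2$) and reindex by the starting indices $i_\alpha = \ell_{j_\alpha - 1}$ and gap values $g_\alpha = \ell_{j_\alpha} - \ell_{j_\alpha - 1}$; this rewrites the averaged sum as
\begin{equation}
\frac{2}{|I_n|\mu_n^2}\sum_{g_1,g_2 \ge 0} e^{it(g_1+g_2)} \sum_{i_1 < i_2} X_{i_1,i_1+g_1,i_2,i_2+g_2}(n),
\end{equation}
which by hypothesis (2) equals $\sum_{g_1,g_2} e^{it(g_1+g_2)}[P(g_1)P(g_2) + \mathrm{error}(n,g_1,g_2)]$. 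Since $|e^{it(g_1+g_2)}|=1$ and the errors sum to $o(1)$, the limit is $\widehat{\nu}(t)^2$, as needed.

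The main technical obstacle is the combinatorial bookkeeping in the reindexing step: one must verify that for each $z$ with decomposition $b_{\ell_1}+\cdots+b_{\ell_{k(z)}}$, every ordered pair $2 \le j_1 < j_2 \le k(z)$ corresponds to a unique configuration $(i_1, i_1+g_1, i_2, i_2+g_2)$ counted by $X_{i_1,i_1+g_1,i_2,i_2+g_2}(n)$, with the inequalities $i_1 < i_2$ and $i_1+g_1 \le i_2$ automatically enforced by the gap structure of the decomposition (equality $i_1+g_1=i_2$ occurs exactly when the two chosen gaps are adjacent). Once this combinatorial identification is verified, the entire proposition reduces to a direct application of hypothesis (2) of Theorem~\ref{genthm} together with Lemma~\ref{lemmamean2}.
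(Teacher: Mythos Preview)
Your proposal is correct and follows essentially the same approach as the paper: reduce to showing $\E_z[\widehat{\nu_{z,n}}(t)^2]\to\widehat{\nu}(t)^2$, invoke Lemma~\ref{lemmamean2} to swap $(k(z)-1)^{-2}$ for $\mu_n^{-2}$, split into diagonal and off-diagonal pieces, and apply hypothesis~(2) of Theorem~\ref{genthm}. The only cosmetic difference is that the paper writes the variance directly as $\E_z[\widehat{\nu_{z,n}}(t)^2]-\widehat{\nu}_n(t)^2$ whereas you expand the square fully and use Proposition~\ref{mean} on the cross term; your version is arguably more careful, and your explicit remarks on the reindexing bookkeeping fill in a step the paper leaves implicit.
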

\begin{proof}
Note that
\begin{equation}
\Var_n(t) \ :=\ \lim_{n\rightarrow \infty} \E_z[(\widehat{\nu_{z,n}}(t)-\widehat{\nu}_n(t))^2]\ = \ \E_z[\widehat{\nu_{z,n}}(t)^2]-\widehat{\nu}_n(t)^2.
\end{equation}
We show that $\lim_{n\rightarrow\infty}\E_z[\widehat{\nu_{z,n}}(t)^2]$ differs from \begin{equation}
\widehat{\nu}(t)^2\ = \ \sum_{g_1=0}^{\infty}P(g_1)e^{itg_1}\sum_{g_2=0}^{\infty}P(g_2)e^{itg_2}
\ = \ \sum_{g_1,g_2}P(g_1)P(g_2)e^{it(g_1+g_2)}
\end{equation}
by $o(1)$. Let $g_1$ and $g_2$ be two arbitrary gaps starting at the indices $j_1\leq j_2$. We have
\begin{eqnarray}
& & \E_z[\widehat{\nu_{z,n}}(t)^2] \ = \  \frac{1}{|I_n|} \sum_{z=b_{c_1n+d_1}}^{b_{c_2n+d_2}-1}\frac{1}{(k(z)-1))^2}\sum_{r=2}^{k(z)}e^{it(\ell_r(z)-\ell_{r-1}(z))}\sum_{w=2}^{k(z)}e^{it(\ell_w(z)-\ell_{w-1}(z))}\nonumber\\
& &\ \ \ = \ \frac{1}{|I_n|}\frac{1}{\mu_n}\left(2\sum_{\substack{j_1<j_2\\ g_1,g_2}}X_{j_1,j_1+g_1,j_2,j_2+g_2}(n)e^{itg_1}e^{itg_2} +\sum_{j_1,g_1}X_{j_1,j_1+g_1}(n)e^{2itg_1}\right)+o(1).\ \ \ \ \ 
\end{eqnarray}
The last line follows by Lemma~\ref{lemmamean2} (the 2 is from $j_1<j_2$). The diagonal term doesn't contribute to the limit as the denominator is of order $n^2|I_n|$ and $\sum_{j_1,g_1}X_{j_1,j_1+g_1}(n)e^{2itg_1}$ is of order $n|I_n|$. Using the second condition from Theorem~\ref{genthm} gives $\lim_{n\rightarrow\infty}\Var_n(t)=0$.
\end{proof}

\begin{proof}[Proof of Theorem~\ref{genthm}] L\'{e}vy's criterion (see \cite{FG}) states that if a sequence of random variables $\{R_n\}$ whose characteristic functions $\{\phi_n\}$ converge pointwise to $\phi$, where $\phi$ is the characteristic function of some random variable $R$, then the random variables $R_n$ converge to $R$ in distribution. In our case, Propositions~\ref{mean} and \ref{var} along with Chebyshev's Theorem ensure that for any $\varepsilon>0$, almost all of the characteristic functions $\widehat{\nu_{z,n}}(t)$ are within $\varepsilon$ of $\widehat{\nu}(t)$. Thus we can take a subset of $z\in I_n$ where the individual gap measure of each $z$ converge to the average measure as $n$ tends to infinity and almost all $z\in I_n$ are chosen. \end{proof}

%%%%%%%%%%%%%%%%%%%%%%%%%%%%%%%%%%%%%%%%%%%%%%%%%%%%%%%%%%%%%%%%%%%%%%%%%%%%%%%%%%%%%%%%%%%%%%%%%%%%%%%%%%%%%%%%%%%%%%%%%%%%%%%%%%%%%%%%%%%%%%%%%%%%
%%%%%%%%%%%%%%%%%%%%%%%%%%%%%%%%%%%%%%%%%%%%%%%%%%%%%%%%%%%%%%%%%%%%%%%%%%%%%%%%%%%%%%%%%%%%%%%%%%%%%%%%%%%%%%%%%%%%%%%%%%%%%%%%%%%%%%%%%%%%%%%%%%%%
%%%%%%%%%%%%%%%%%%%%%%%%%%%%%%%%%%%%%%%%%%%%%%%%%%%%%%%%%%%%%%%%%%%%%%%%%%%%%%%%%%%%%%%%%%%%%%%%%%%%%%%%%%%%%%%%%%%%%%%%%%%%%%%%%%%%%%%%%%%%%%%%%%%%
%%%%%%%%%%%%%%%%%%%%%%%%%%%%%%%%%%%%%%%%%%%%%%%%%%%%%%%%%%%%%%%%%%%%%%%%%%%%%%%%%%%%%%%%%%%%%%%%%%%%%%%%%%%%%%%%%%%%%%%%%%%%%%%%%%%%%%%%%%%%%%%%%%%%

\ \\

\end{document}